\documentclass[letterpaper, 10 pt, conference]{ieeeconf}  
\IEEEoverridecommandlockouts                              
\usepackage{cite}
\usepackage{amsmath,amssymb,amsfonts,mathrsfs}
\usepackage{graphicx}
\usepackage{tikz,pgfplots}
\usetikzlibrary{arrows.meta, shapes.geometric, positioning, calc}
\pgfplotsset{compat=1.18}
\usepackage[hidelinks]{hyperref}

\newtheorem{theorem}{Theorem}
\newtheorem{proposition}{Proposition}
\newtheorem{lemma}{Lemma}

\newtheorem{definition}{Definition}
\newtheorem{assumption}{Assumption}
\newtheorem{remark}{Remark}
\newtheorem{example}{Example}

\newcommand{\X}{\mathbb{X}}
\newcommand{\TX}{\mathbb{T}\mathbb{X}}
\newcommand{\Q}{\mathbb{Q}}
\newcommand{\R}{\mathbb{R}}
\newcommand{\NE}{\mathbb{NE}}
\newcommand{\BR}{\mathbb{B}}
\newcommand{\PM}{\mathfrak{F}}
\newcommand{\cF}{\mathcal{F}}
\newcommand{\cG}{\mathcal{G}}
\newcommand{\cH}{\mathcal{H}}
\newcommand{\cV}{\mathcal{V}}
\newcommand{\cT}{\mathcal{T}}
\newcommand{\cS}{\mathcal{S}}
\newcommand{\cD}{\mathcal{D}}
\newcommand{\cL}{\mathcal{L}}
\newcommand{\Breg}{\mathcal{B}_{-g}}
\newcommand{\Bregf}{\mathcal{B}_{-f}}
\newcommand{\sX}{\mathscr{X}}
\newcommand{\sP}{\mathscr{P}}
\newcommand{\pbar}{\bar{p}}
\newcommand{\ptil}{\tilde{p}}
\newcommand{\fmax}{f_{\max}}
\newcommand{\Eq}{\mathbb{E}}
\newcommand{\dist}{\mathrm{dist}}
\DeclareMathOperator*{\argmax}{arg\,max}

\title{\LARGE \bf Learning in Large Populations With Anticipatory Payoffs: \\ Bregman Meets Willems and Lyapunov}

\author{Tzuyu Jeng and Nuno C. Martins
\thanks{This work was supported by NSF Grant ECCS2536055 and AFOSR Grant FA95502310467.}
\thanks{The authors are with the Department of Electrical and Computer Engineering and the Institute for Systems Research, University of Maryland, College Park, MD 20742, USA (e-mail: tyjeng@umd.edu, nmartins@umd.edu).}%
}

\begin{document}

\maketitle
\thispagestyle{empty}
\pagestyle{empty}

\begin{abstract}
We study evolutionary Nash equilibrium learning in large populations whose payoffs are generated by dynamic mechanisms. Counterclockwise dissipativity (CCW) provides a passivity-like framework for establishing learning/convergence under a broad convex cone of so-called well-behaved learning rules satisfying a natural payoff-alignment condition according to which, away from equilibria, the population's state (strategic profile) moves in a direction positively correlated with the payoffs. The cone is closed under conic combinations and encompasses a broad range of noise-free rules, including discontinuous rules such as best response, imitative rules such as the replicator rule that is not $\delta$-passive, and all noise-free $\delta$-passive rules. Existing CCW results require no knowledge of the particular rule, but establish convergence without providing a Lyapunov function or Lyapunov stability. We develop a methodology that closes this gap by constructing rule-independent Lyapunov functions from CCW storage functions of the payoff mechanism. For a nonlinear class of anticipatory mechanisms, in which agents compare current payoffs with those associated with a smoothed population state, we show that a storage can be derived as a Bregman divergence. The resulting Lyapunov function combines a potential gap with this divergence and establishes global asymptotic stability. A congestion game with a nonlinear congestion-growth toll illustrates the results.
\end{abstract}

\section{Introduction}
\label{sec:introduction}

Population games and evolutionary dynamics~\cite{Sandholm2010Population-Game,Sandholm2015Handbook-of-gam} provide systematic tools to model and analyze the strategic, non-cooperative interactions of large populations of learning agents. A learning rule (rule for short) describes how agents revise their strategies in response to the payoffs these strategies yield, and it may be programmed into artificial agents or model the bounded rationality of natural agents. Since the agents are indistinguishable, the aggregate strategic profile is captured by a population state, whose entries are the proportions of agents using each strategy, and which evolves according to an evolutionary dynamics model determined by the rule. The payoffs are produced by a payoff mechanism, which may represent the environment where the interaction takes place, as in congestion games~\cite{Beckmann1956Studies-in-the-}, or may be designed by a coordinator to steer the population towards desirable equilibria~\cite{Quijano2017The-role-of-pop,Martins2023Epidemic-popula,Certorio2022Epidemic-Popula,Barreiro-Gomez2016Constrained-dis}. 

We are interested in evolutionary Nash equilibrium learning, which refers to the convergence of the population state to the Nash equilibria set~\cite{Jr.1951Non-Cooperative} of the payoff mechanism, as opposed to Nash equilibrium seeking~\cite{Gadjov_2019aa}, which concerns processes not naturally expressed as evolutionary dynamics. When it holds, the Nash equilibria set predicts the long-term behavior of the population and, in potential games, has optimality properties~\cite{Sandholm2001Potential-games} that a designer can exploit.

\subsection{Passivity Approaches and the Gap We Address}

Early results on Nash equilibrium learning focused on memoryless payoff mechanisms, in particular potential games~\cite{Sandholm2001Potential-games} and contractive games~\cite{Hofbauer2009Stable-games-an}. More recently, dynamic payoff mechanisms have been studied through the system-theoretic notion of $\delta$-passivity~\cite{Fox2013Population-Game}. Rooted in the dissipativity theory of Willems~\cite{Willems1972Dissipative-dyn}, this approach was further developed in~\cite{Park2019From-Population,Arcak2020Dissipativity-T}, and was extended to convex cones of hybrid rules in~\cite{Certorio_2024aa}. Related passivity ideas have also been used to study reinforcement and higher-order learning in finite games~\cite{Gao2020On-Passivity-Re}. However, important rules with so-called \emph{imitative} behavior~\cite[\S13.1.5]{Sandholm2015Handbook-of-gam}, including the replicator rule, are not $\delta$-passive~\cite{Park2018Passivity-and-e}. Counterclockwise dissipativity (CCW), introduced in~\cite{Angeli2006Systems-With-Co} to study the robust stability of feedback systems, was adopted in~\cite{Martins_2025aa,Hankins_2025aa,Hankins_2026aa} to overcome this limitation and study hierarchical decision making~\cite{YuWenCDC2025}. Namely, CCW provides a passivity-like framework for establishing learning/convergence under a broad convex cone of so-called well-behaved rules, which steer the population state in a direction positively correlated with the payoffs and are stationary exactly at best responses. Among the well-behaved rules are not only the aforementioned imitative rules, but also discontinuous rules such as the best response~\cite{Hankins_2025aa} rule and all noise-free rules known to be $\delta$-passive. 

The main result in~\cite{Martins_2025aa} establishes Nash equilibrium learning by combining an integral inequality with Barb\u{a}lat's lemma. It proves attractivity of the Nash equilibrium set, but provides neither a Lyapunov function nor Lyapunov stability. This is a notable limitation because a Lyapunov function can confine trajectories to invariant sublevel sets, providing bounds on transient excursions and supporting robustness and safety guarantees. It is also useful for relating deterministic evolutionary dynamics to the finite population models they approximate. In particular, the existence of a Lyapunov function can lead to stronger probabilistic guarantees on how closely the finite population state follows the limiting deterministic trajectory~\cite{Benaim2003Deterministic-a}.

For $\delta$-passive payoff mechanisms, Lyapunov functions can be constructed by combining storage functions for the payoff mechanism and the evolutionary dynamics stemming from the rule, as in~\cite{Fox2013Population-Game,Park2019From-Population,Certorio_2024aa}. This construction, however, depends on the rule being $\delta$-passive and on a suitable storage function being available. It therefore does not extend directly to the broader class of rules covered by the CCW framework.

\subsection{Main Objective, Contributions and Paper Structure}

Our objective is to obtain Lyapunov stability and convergence guarantees for the feedback interconnection of a CCW payoff mechanism with an evolutionary dynamics model, when the rule is known only to belong to the convex cone of well-behaved rules considered in~\cite{Martins_2025aa,Hankins_2025aa}. To this end, we adapt the storage function characterization of CCW systems and the Lyapunov construction for positive feedback loops proposed in~\cite{Angeli2006Systems-With-Co}. 

Our main contribution is a rule-independent Lyapunov methodology for CCW payoff mechanisms. A CCW storage function yields a closed-loop Lyapunov function, while the rule enters the analysis only through a nonnegative payoff-alignment term\footnote{This term is formalized later as the correlation function.} shared by all well-behaved rules. Thus, the same Lyapunov function applies throughout the cone without requiring knowledge of the particular rule. We then specialize this methodology to a nonlinear class of anticipatory payoff mechanisms, where agents compare current payoffs with those associated with a smoothed population state. Extending earlier linear formulations~\cite{Shamma2005Dynamic-fictiti,Arslan2006Anticipatory-le,Fox2013Population-Game,Hankins_2025aa}, we show that a Bregman divergence~\cite{Bregman1967A-Relaxation-Me} provides the required CCW storage and leads to a Lyapunov function consisting of a potential gap and a Bregman divergence between the current and smoothed states. This construction establishes global asymptotic stability and is illustrated by a congestion game in which the anticipatory term acts as a nonlinear congestion-growth toll.

\paragraph*{\bf Paper structure} \S\ref{sec:framework} describes the framework and the problem, \S\ref{sec:rulesCCW} introduces well-behaved rules and CCW payoff mechanisms, \S\ref{sec:storage} presents the Lyapunov construction, \S\ref{sec:anticipatory} studies anticipatory payoff mechanisms, \S\ref{sec:example} presents the congestion game and \S\ref{sec:conclusions} concludes the paper.

\paragraph*{\bf Notation} We use $A'$ to denote the transpose of $A$ in $\R^{n \times m}$, $\mathbf{1}$ for the vector of ones, and $[a]_+:=\max\{a,0\}$. The gradient and the Hessian of a twice differentiable function $\phi$ are $\nabla \phi$ and $\nabla^2 \phi$. We write $\dot{\phi}(t)$ for the time derivative of a function of time and $\dist(z,\Eq)$ for the Euclidean distance from a point $z$ to a set $\Eq$.

\section{Framework and Problem Formulation}
\label{sec:framework}

We consider a large population of agents, each following one strategy from the set $\{1,\ldots,n\}$, with $n \geq 2$, and each able to revise its strategy repeatedly. The payoff vector $p(t)$ in $\R^n$ collects the net rewards of the strategies at time $t$, and the agents typically seek strategies with higher payoffs. In the large population limit~\cite{Sandholm2010Population-Game}, the aggregate strategic profile is the population state $x(t)$ in
\begin{equation*}
\X := \Big\{ x \in [0,1]^n \ \big| \ \textstyle\sum_{\ell=1}^n x_\ell = 1 \Big\},
\end{equation*}
whose $i$-th entry $x_i(t)$ is the proportion of agents following strategy $i$ at time $t$. The set $\X$ is compact and convex, its tangent space is $\TX := \{v \in \R^n \ | \ \mathbf{1}'v = 0\}$, and any difference of two elements of $\X$ lies in $\TX$.

\subsection{Learning Rules and Evolutionary Dynamics}

A locally Lipschitz continuous map $\cT: \X \times \R^n \to \R^{n \times n}_{\geq 0}$, referred to as a learning rule (rule for short), models the strategy revision preferences of the agents. Namely, $\cT_{ij}(x,p)$ is the rate at which an agent following $i$ switches to $j$. Following~\cite[\S4.1.2]{Sandholm2010Population-Game}, the population state evolves according to the evolutionary dynamics model (EDM)
\begin{subequations}
\begin{align}
\dot{x}(t) &= \cV\big(x(t),p(t)\big), \quad t \geq 0, \tag{EDMa} \label{eq:EDMa}\\
\cV_i(x,p) &:= \sum_{j=1}^n x_j \cT_{ji}(x,p) - \sum_{j=1}^n x_i \cT_{ij}(x,p), \tag{EDMb} \label{eq:EDMb}
\end{align}
\end{subequations}
where $\cV_i$ is the net flow of agents into strategy $i$. Since $\mathbf{1}'\cV(x,p)=0$ and $\cV_i(x,p) \geq 0$ whenever $x_i = 0$, the set $\X$ is forward invariant. See~\cite[Part~II]{Sandholm2010Population-Game} and~\cite[\S13.3--13.5]{Sandholm2015Handbook-of-gam} for a discussion of rule types. 

\begin{remark}[\bf Rule continuity] For simplicity, in this paper, we consider Lipschitz continuous rules. However, we could have allowed for discontinuous rules, such as the best response rule, by considering more general notions of solution and appropriately changing the way we handle time derivatives along solutions as done in \cite{Hankins_2025aa}. 
Furthermore, \cite[Example~3]{Martins_2025aa} shows that the best response rule can be approximated to an arbitrary accuracy using Lipschitz continuous rules such as those considered in this paper.
\end{remark} 

\subsection{Payoff Mechanisms in State-Space Form}

As in~\cite{Martins_2025aa}, it suffices to consider Lipschitz continuous population state and payoff trajectories, leading to the sets
\begin{align*}
\sX &:= \big\{ \mathbf{x}:[0,\infty) \to \X \ \big| \ \mathbf{x} \text{ is Lipschitz continuous} \big\}, \\
\sP &:= \big\{ \mathbf{p}:[0,\infty) \to \R^n \ \big| \ \mathbf{p} \text{ is Lipschitz continuous} \big\}.
\end{align*}
A payoff mechanism $\PM:\sX \to \sP$ is a causal map that assigns a payoff trajectory $\mathbf{p}$ to each population state trajectory $\mathbf{x}$. For the payoff mechanism, $\mathbf{x}$ is the input and $\mathbf{p}$ is the output, while in the EDM the roles are reversed, as illustrated in Fig.~\ref{fig:closedloop}. Since our goal is to construct Lyapunov functions, we focus on payoff mechanisms admitting a state-space representation
\begin{subequations}
\begin{align}
\dot{q}(t) &= \cG\big(x(t),q(t)\big), \quad q(0) = q_0 \in \Q, \tag{PMa} \label{eq:PMa} \\
p(t) &= \cH\big(x(t),q(t)\big), \tag{PMb} \label{eq:PMb}
\end{align}
\end{subequations}
where $q(t)$ takes values in a compact set $\Q \subset \R^m$, the map $\cG: \X \times \Q \to \R^m$ is locally Lipschitz continuous and $\cH$ is continuously differentiable on an open set containing $\X \times \Q$. The following standing assumption is the state-space counterpart of the one in~\cite{Martins_2025aa}.

\begin{assumption} \label{ass:PM} The payoff mechanism~\eqref{eq:PMa}--\eqref{eq:PMb} \mbox{satisfies}:
\begin{enumerate}
\item The set $\Q$ is forward invariant for~\eqref{eq:PMa}, that is, if $q_0 \in \Q$ and $\mathbf{x} \in \sX$, then $q(t) \in \Q$ for all $t \geq 0$.
\item There is a Lipschitz continuous map $\cF: \X \to \R^n$, called the stationary game of $\PM$, s.t. for all $\mathbf{x} \in \sX$
\begin{equation*}
\lim_{t \to \infty} \|\dot{x}(t)\| = 0 \implies \lim_{t \to \infty} \big\| p(t) - \cF\big(x(t)\big) \big\| = 0.
\end{equation*}
\end{enumerate}
\end{assumption}

Under Assumption~\ref{ass:PM}.1, the feedback interconnection of~\eqref{eq:PMa}--\eqref{eq:PMb} with any EDM evolves on the compact and forward invariant set $\X \times \Q$ and, since its right-hand side is locally Lipschitz continuous, it has a unique solution defined for all $t \geq 0$ from any initial condition, along which $\mathbf{x} \in \sX$, $\mathbf{p} \in \sP$ and $\mathbf{p}$ is bounded. The stationary game $\mathcal{F}$ approximates the payoff vector when $\mathbf{x}$ slows down or converges, akin to the input-to-output characteristic in~\cite{Angeli2006Systems-With-Co}.

\begin{figure}[t]
\begin{center}
    \vspace{.1in}
\begin{tikzpicture}[scale=0.9, transform shape, node distance=0.4cm,
 block/.style={rectangle, draw, rounded corners, minimum width=3.2cm, minimum height=0.95cm, align=center, thick},
 flow/.style={-{Latex[length=2.5mm,width=1.8mm]}, thick}]
\node[block, fill=teal!8] (edm) at (0,0) {\large $\dot{x}=\cV(x,p)$ \\ (EDM)};
\node[block, fill=orange!8, below=of edm] (pm) {\large $\PM:\mathbf{x} \mapsto \mathbf{p}$ \\ (PM)};
\draw[flow] (pm.west) -- ++(-0.8,0) |- (edm.west) node[midway, left] {\large $\mathbf{p}$};
\draw[flow] (edm.east) -- ++(0.8,0) |- (pm.east) node[midway, right] {\large $\mathbf{x}$};
\end{tikzpicture}
\end{center}
\vspace{-3mm}
\caption{Feedback interconnection of the EDM and the payoff mech. $\PM$.}
\label{fig:closedloop}
\end{figure}

\subsection{Games, Potential Games and Nash Equilibria}

We say that $\PM$ is a game when it is memoryless, that is, $p(t) = \cF\big(x(t)\big)$ for a Lipschitz continuous $\cF:\X \to \R^n$, in which case the stationary game is $\cF$ itself. As in~\cite{Sandholm2001Potential-games}, a game $\cF$ is a potential game when there is a potential $f:\X \to \R$ satisfying, for all $T>0$,
\begin{equation} \label{eq:PotentialGame}
f\big(x(T)\big) - f\big(x(0)\big) = \int_0^T \dot{x}'(t) \cF\big(x(t)\big) dt, \ \ \mathbf{x} \in \sX.
\end{equation}
In particular, $\cF = \nabla f$ for a continuously differentiable $f$ defined on an open set containing $\X$ yields a potential game.

\begin{definition}[\bf Nash equilibria] \label{def:NE} The best response map $\BR:\R^n \to 2^\X$ and the Nash equilibria set of $\cF$ are
\begin{equation*}
\BR(p) := \argmax_{z \in \X}  z'p, \ 
\NE(\cF) := \big\{ x \in \X \ | \ x \in \BR\big(\cF(x)\big) \big\}.
\end{equation*}
\end{definition}

The set $\NE(\cF)$ is nonempty and closed for any continuous $\cF$. If $\cF$ is a potential game with potential $f$, then $\NE(\cF)$ is the set of population states satisfying the Karush-Kuhn-Tucker conditions for maximizing $f$ over $\X$~\cite[\S3, \S5]{Sandholm2001Potential-games}, so that, when $f$ is concave, $\NE(\cF)$ is the set of maximizers.

\subsection{Problem Formulation}

Consider the feedback interconnection in Fig.~\ref{fig:closedloop} with the payoff mechanism~\eqref{eq:PMa}--\eqref{eq:PMb}, where the rule $\cT$ is not known precisely, but is known to belong to a broad convex cone of so-called well-behaved rules we will discuss in \S\ref{sec:rulesCCW} that includes as particular cases most rules studied to date. We seek a continuously differentiable function ${\cL: \X \times \Q \to \R_{\geq 0}}$ that is nonincreasing along the solutions of the closed loop for every well-behaved rule, and that certifies Lyapunov stability of, and convergence to, the set of closed-loop equilibria $(x,q)$ with $x$ in $\NE(\cF)$. 

\section{Well-Behaved Rules and CCW}
\label{sec:rulesCCW}

This section includes some of the definitions and results from~\cite{Martins_2025aa} needed to make this paper self-contained.

\begin{definition} \label{def:PC} Given a rule $\cT$, the correlation function ${\rho: \X \times \R^n \to \R}$ is defined as
\begin{equation} \label{eq:correlation}
\rho(x,p) := p'\cV(x,p), \quad x \in \X, \ p \in \R^n.
\end{equation}
The rule satisfies positive correlation (PC) if, for all $x$ in $\X$ and $p$ in $\R^n$,
\begin{equation*} \tag{PC}
\cV(x,p) \neq 0 \implies \rho(x,p) > 0.
\end{equation*}
It satisfies Nash stationarity if, for all $x \in \X$ and $p \in \R^n$,
\begin{equation*} \tag{NS}
\cV(x,p) = 0 \iff x \in \BR(p).
\end{equation*}
\end{definition}
\begin{definition}[\bf Well-behaved rule] Rules satisfying both (PC) and (NS) are called  well-behaved~\cite{Sandholm2005Excess-payoff-d}.
\end{definition}

Positive correlation is an alignment condition requiring that, away from rest, $\dot{x}$ forms an acute angle with $p$, so that the population moves towards higher average payoff. Under (PC), Nash stationarity is equivalent to
\begin{equation} \label{eq:NSrho}
\rho(x,p) = 0 \iff x \in \BR(p), \quad x \in \X, \ p \in \R^n,
\end{equation}
which is the form we will use. As argued in~\cite[\S III.A]{Martins_2025aa}, (PC) holds for any rule in which the net flow of agents between two strategies is directed towards the one with the higher payoff, which is why most rules of interest satisfy it.

\subsection{Canonical Classes and Hybrid Rules}

The following canonical rule classes are known to satisfy (PC)~\cite{Sandholm2010Pairwise-compar,Sandholm2005Excess-payoff-d,Sandholm2010Population-Game}. In each case the maps $\phi$, $\varphi$ and $\psi$ are locally Lipschitz continuous, and $\hat{p}_j := p_j - x'p$ is the excess payoff of strategy $j$ relative to the population average.

\begin{definition} \label{def:classes} A rule $\cT$ belongs to the pairwise comparison class $\mathbb{T}^{\mathrm{CO}}$ if $\cT_{ij}(x,p) = \phi_{ij}(p)$, where $\phi_{ij}(p)>0$ when $p_j>p_i$ and $\phi_{ij}(p)=0$ otherwise; to the excess payoff class $\mathbb{T}^{\mathrm{EP}}$ if $\cT_{ij}(x,p) = \varphi_j(\hat{p})$, where $\hat{p}'\varphi(\hat{p})>0$ whenever $\hat{p}$ has a positive entry; and to the imitative class $\mathbb{T}^{\mathrm{I}}$ if $\cT_{ij}(x,p) = x_j \psi_{ij}(x,p)$, where $p_j \geq p_i$ if and only if $\psi_{kj}(x,p) - \psi_{jk}(x,p) \geq \psi_{ki}(x,p) - \psi_{ik}(x,p)$ for all $i$, $j$ and $k$.
\end{definition}

\begin{example} \label{ex:rules} Smith's rule~\cite{Smith1984The-stability-o} $\cT^{\mathrm{Smith}}_{ij}(x,p) := [p_j - p_i]_+$ is in $\mathbb{T}^{\mathrm{CO}}$, the Brown-von Neumann-Nash rule~\cite{Brown1950Solutions-of-ga} $\cT^{\mathrm{BNN}}_{ij}(x,p) := [\hat{p}_j]_+$ is in $\mathbb{T}^{\mathrm{EP}}$, and the replicator rule~\cite{Taylor1978Evolutionarily-} $\cT^{\mathrm{R}}_{ij}(x,p) := x_j[p_j - p_i]_+$ is in $\mathbb{T}^{\mathrm{I}}$.
\end{example}

The three classes are convex cones, and since it is unrealistic to expect a population to adhere to a single one of them~\cite[p.~5]{Sandholm2010Pairwise-compar}, we consider the following class.

\begin{definition}[\bf Hybrid rule] \label{def:hybrid} A rule $\cT$ is hybrid if
\begin{equation} \label{eq:hybrid}
\cT = \alpha^{\mathrm{I}}\cT^{\mathrm{I}} + \alpha^{\mathrm{CO}}\cT^{\mathrm{CO}} + \alpha^{\mathrm{EP}}\cT^{\mathrm{EP}} + \tilde{\alpha}\tilde{\cT}, \quad \alpha^{\mathrm{CO}} + \alpha^{\mathrm{EP}} > 0,
\end{equation}
where the weights are nonnegative, $\cT^{\mathrm{I}} \in \mathbb{T}^{\mathrm{I}}$, $\cT^{\mathrm{CO}} \in \mathbb{T}^{\mathrm{CO}}$, $\cT^{\mathrm{EP}} \in \mathbb{T}^{\mathrm{EP}}$ and $\tilde{\cT}$ is any rule satisfying (PC).
\end{definition}

A hybrid rule models agents who, at each revision opportunity, follow one of the component rules with probabilities proportional to the weights, and the constraint ${\alpha^{\mathrm{CO}} + \alpha^{\mathrm{EP}} > 0}$ excludes purely imitative rules, for which unused strategies are never adopted. The following proposition establishes the generality of well-behaved rules.

\begin{proposition}[{\rm see \cite[Proposition~1]{Martins_2025aa}}] \label{prop:hybrid} Any hybrid rule is well-behaved and the correlation function of a hybrid rule is the conic combination of the correlation functions of its components, with the weights in~\eqref{eq:hybrid}.
\end{proposition}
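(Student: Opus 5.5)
The argument has two steps. First, linearity: the map from a rule to its correlation function is linear. Second, I would show well-behavedness by checking (PC) and the equivalent form \eqref{eq:NSrho} of (NS) directly from the conic combination.

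\textbf{Step 1 (linearity).} By \eqref{eq:EDMb}, $\cV$ is linear in $\cT$ for each fixed $(x,p)$. By \eqref{eq:correlation}, $\rho=p'\cV$ is therefore linear in $\cT$ as well. For a hybrid rule \eqref{eq:hybrid} this gives
\begin{equation*}
\cV=\alpha^{\mathrm{I}}\cV^{\mathrm{I}}+\alpha^{\mathrm{CO}}\cV^{\mathrm{CO}}+\alpha^{\mathrm{EP}}\cV^{\mathrm{EP}}+\tilde\alpha\tilde\cV,
\end{equation*}
and the same decomposition holds for $\rho$. This is the second claim. Local Lipschitz continuity and nonnegativity of the hybrid $\cT$ also follow immediately, so the hybrid is indeed a rule.

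\textbf{Step 2 (component facts).} I would invoke the known properties of the canonical classes \cite{Sandholm2010Pairwise-compar,Sandholm2005Excess-payoff-d,Sandholm2010Population-Game}:
\begin{itemize}
\item[(a)] Every component satisfies (PC). Hence each component correlation function is nonnegative, and it is zero exactly when the component's $\cV$ vanishes.
\item[(b)] The classes $\mathbb{T}^{\mathrm{CO}}$ and $\mathbb{T}^{\mathrm{EP}}$ satisfy (NS): their $\cV$ vanishes iff $x\in\BR(p)$.
\item[(c)] Every component vanishes at best responses. If $x\in\BR(p)$, every strategy $i$ in the support of $x$ attains $\max_j p_j$. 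For imitative and pairwise rules, flows out of supported strategies toward higher-payoff strategies are then zero. For the excess payoff class, $\hat p\le 0$ with equality on the support.
\end{itemize}

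\textbf{Step 3 (conclusion).}
\begin{itemize}
\item For (PC): if $\cV\neq0$, some component with positive weight has $\cV^{\cdot}\neq0$. Its correlation is positive, and the others are nonnegative, so $\rho>0$.
\item For \eqref{eq:NSrho}: $\rho=0$ forces every positively weighted component correlation to vanish. In particular, either $\rho^{\mathrm{CO}}=0$ or $\rho^{\mathrm{EP}}=0$ with positive weight. By (PC) and (NS) of that component, this gives $x\in\BR(p)$.
\item Conversely, $x\in\BR(p)$ makes every component $\cV$ vanish by (c), so $\rho=0$.
\end{itemize}

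The main obstacle is fact (c) for the arbitrary (PC) rule $\tilde\cT$. (PC) alone does not force $\tilde\cV=0$ at best responses, so the ``if'' direction of (NS) could fail. I would handle this as follows. At $x\in\BR(p)$, write $x'p=\max_j p_j=:m$. Since $\mathbf 1'\tilde\cV=0$ and $\tilde\cV_i\ge0$ whenever $x_i=0$, we get $p'\tilde\cV=\sum_{i:x_i=0}(p_i-m)\tilde\cV_i\le0$. (PC) then forces $\tilde\cV=0$. This closes the gap and completes the proof.
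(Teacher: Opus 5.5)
Your proof is correct. The paper gives no argument of its own here and simply cites \cite[Proposition~1]{Martins_2025aa}, so there is no in-paper proof to compare against; your argument is a complete, self-contained proof along the standard lines: linearity of $\cV$ and $\rho$ in $\cT$, (PC) inherited componentwise, the ``only if'' half of (NS) inherited from whichever of the $\mathrm{CO}$ or $\mathrm{EP}$ components has positive weight, and the fact that (PC) together with $\mathbf{1}'\cV=0$ and $\cV_i\ge 0$ when $x_i=0$ makes every best response a rest point.

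Two small points. Your closing argument applies verbatim to every (PC) rule, including each canonical component and the hybrid itself, so the class-by-class remarks in fact (c) are unnecessary; for the excess payoff and general imitative classes they would not by themselves give $\cV=0$. Also, your sentence that ``(PC) alone does not force'' $\tilde\cV=0$ at best responses is inaccurate: your own argument shows that (PC), combined with the EDM structure every rule has, does force it.
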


\begin{example} \label{ex:hybrid} The rule $\cT^{\mathrm{b}}_{ij}(x,p) := x_j[p_j - p_i]_+ + 0.01[p_j-p_i]_+$, a conic combination of the replicator and Smith's rules, can be viewed as a close approximation of the replicator rule.
\end{example}

\subsection{CCW Payoff Mechanisms}

We now recall the notion of counterclockwise dissipativity~\cite{Angeli2006Systems-With-Co}, as adapted in~\cite{Martins_2025aa}.

\begin{definition}[\bf CCW] \label{def:CCW} The payoff mechanism $\PM$ with initial state $q_0$ is counterclockwise dissipative, or CCW for short, when
\begin{equation} \label{eq:CCW}
\alpha_\PM := - \inf_{T>0, \ \mathbf{x} \in \sX} \int_0^T \dot{p}'(t) x(t) \, dt < \infty.
\end{equation}
\end{definition}

If a rule satisfies (PC), then $\int_0^T \dot{x}'(t)p(t)dt \geq 0$ for every $\mathbf{p}$ in $\sP$, so the EDM is itself CCW and Fig.~\ref{fig:closedloop} is a positive feedback loop of two CCW systems, the configuration studied in~\cite{Angeli2006Systems-With-Co}. The following are key results in~\cite{Martins_2025aa}.

\begin{theorem}[{\rm see \cite[Theorem~1]{Martins_2025aa}}] \label{thm:potentialCCW} A game is CCW if and only if it is a potential game.
\end{theorem}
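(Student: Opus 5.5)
We have to prove that a game (memoryless, $p(t)=\cF(x(t))$) is CCW if and only if it is a potential game. Both directions rest on integration by parts. For Lipschitz $\mathbf{x}$, the payoff $\mathbf{p}=\cF\circ\mathbf{x}$ is Lipschitz, so $x'p$ is absolutely continuous. This gives
\begin{equation*}
\int_0^T \dot{p}'(t)x(t)\,dt = x'(T)p(T) - x'(0)p(0) - \int_0^T \dot{x}'(t)p(t)\,dt .
\end{equation*}
The boundary terms are bounded by $2\fmax$, where $\fmax:=\max_{z\in\X}\|\cF(z)\|$ (recall $\|x\|\le 1$ on $\X$). So CCW is equivalent to the supply $\int_0^T \dot{x}'\cF(x)\,dt$ being bounded above uniformly over all $T>0$ and $\mathbf{x}\in\sX$.

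\emph{Potential $\Rightarrow$ CCW.} By \eqref{eq:PotentialGame}, $\int_0^T\dot{x}'\cF(x)\,dt = f(x(T))-f(x(0))$. Here $f$ is Lipschitz: applying \eqref{eq:PotentialGame} along segments gives $|f(y)-f(z)|\le \fmax\|y-z\|$. Hence $f$ is bounded on the compact set $\X$, and the supply is bounded by $\max f-\min f$. Therefore $\alpha_\PM<\infty$.

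\emph{CCW $\Rightarrow$ potential.} This is the main step. The plan is to show that the integral of $\dot{x}'\cF(x)$ over every closed Lipschitz loop in $\X$ vanishes.

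Suppose a loop $\gamma$ on $[0,1]$ had $\oint_\gamma \dot{x}'\cF(x)\,dt = c\neq 0$. If $c<0$, traverse $\gamma$ in reverse, which flips the sign, so assume $c>0$. Build $\mathbf{x}\in\sX$ by first moving along a segment to $\gamma(0)$ and then repeating $\gamma$ $k$ times. The concatenation stays Lipschitz with the same constant. The supply over $[0,T_k]$ is then at least $kc - \fmax\cdot\mathrm{diam}(\X)$, which is unbounded in $k$. Via the identity above this contradicts \eqref{eq:CCW}, so every loop integral is zero.

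With loop integrals zero, fix $x^\ast\in\X$ and define $f(z)$ as the line integral of $\cF$ along any Lipschitz path in $\X$ from $x^\ast$ to $z$. This is well defined, since two such paths combine into a loop. It also satisfies \eqref{eq:PotentialGame} for any $\mathbf{x}\in\sX$: append to $\mathbf{x}|_{[0,T]}$ a path from $x^\ast$ to $x(0)$ and one from $x(T)$ back to $x^\ast$, and use that the resulting loop integral is zero. Note that $\X$ is convex, so segment paths exist, and path independence requires no smoothness of $\cF$ beyond continuity.

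The only technicality I expect is a careful justification of the concatenation and reversal constructions within $\sX$. Reparametrizing on uniform time intervals preserves Lipschitz continuity and the value of $\int \dot{x}'\cF(x)\,dt$, since that integral is invariant under orientation-preserving reparametrization and changes sign under reversal. No further obstacles are anticipated.
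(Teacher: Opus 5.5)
The paper does not prove this statement; it quotes it from \cite[Theorem~1]{Martins_2025aa}, so the only in-paper point of comparison is Remark~\ref{rem:potential}. Your argument is correct and self-contained.

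Your ``if'' direction is the integrated form of the identity $\frac{d}{dt}\cS=\dot{p}'x$ for the storage $\cS=\fmax-f+\cF'x$ of Remark~\ref{rem:potential}, combined with Proposition~\ref{prop:storageCCW}. Working with the integral identity directly is more robust, for two reasons. It needs no continuous differentiability of $\cS$, which can fail when $\cF$ is merely Lipschitz. It also derives the Lipschitz continuity, and hence the boundedness, of $f$ from \eqref{eq:PotentialGame} rather than presupposing that $\max f$ exists.

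Your ``only if'' direction has two steps. CCW forces every loop circulation to vanish, by repeating a loop with positive circulation. Then $f$ is built from path-independent line integrals. Apart from the integration by parts, this uses only continuity of $\cF$ and convexity of $\X$. That is exactly what lets the characterization go beyond the symmetric-Jacobian criterion for $C^2$ maps in \cite{Angeli_2009aa}, with no curl condition or Poincar\'e-lemma argument.

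Two cosmetic points. First, rename your $\fmax$, since the paper reserves that symbol for $\max_{z\in\X}f(z)$. Second, the initial segment to $\gamma(0)$ is unnecessary because $x(0)$ is free in $\sX$. You should instead say explicitly that the concatenated path is extended to $[0,\infty)$, for example periodically or by a constant, so that it lies in $\sX$.
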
 
Theorem~\ref{thm:potentialCCW} extends \cite[Proposition 6.2]{Angeli_2009aa}, which establishes that a $C^2$ static map is CCW if and only if its Jacobian is symmetric, to Lipschitz continuous maps on the simplex.

\begin{theorem}[{\rm see \cite[Theorem~2]{Martins_2025aa}}] \label{thm:convergence} If $\cT$ is well-behaved and $\PM$ satisfies Assumption~\ref{ass:PM} and is CCW, then the solution of the feedback system satisfies, for any $x(0)$ in $\X$,
\begin{equation} \label{eq:convergence}
\lim_{t \to \infty} \|\dot{x}(t)\| = 0 \ \text{ and } \ \lim_{t \to \infty} \dist\big(x(t), \NE(\cF)\big) = 0,
\end{equation}
that is, the population achieves evolutionary Nash equilibrium learning.
\end{theorem}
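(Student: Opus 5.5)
The plan follows the integral-inequality plus Barb\u{a}lat route. The CCW property of $\PM$ and (PC) of the rule give finite energy of the correlation term. Barb\u{a}lat's lemma then gives vanishing velocity, and Assumption~\ref{ass:PM}.2 with \eqref{eq:NSrho} gives convergence to $\NE(\cF)$. The rule enters only through its correlation function $\rho$, so knowledge of the particular rule is never needed.

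\textbf{Step 1 (energy bound).} Integrate by parts along the closed-loop solution. Since $x$ and $p$ are Lipschitz,
\begin{equation*}
\int_0^T \dot{x}'(t)p(t)\,dt = p'(T)x(T) - p'(0)x(0) - \int_0^T \dot{p}'(t)x(t)\,dt .
\end{equation*}
Boundedness of $\mathbf{p}$ on the compact set $\X\times\Q$ bounds the first two terms uniformly in $T$. The CCW condition \eqref{eq:CCW} bounds the last term above by $\alpha_\PM$. Because $\dot{x}'p = \rho(x,p) \ge 0$ by (PC), we obtain
\begin{equation*}
\int_0^\infty \rho\big(x(t),p(t)\big)\,dt < \infty .
\end{equation*}

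\textbf{Step 2 (Barb\u{a}lat).} The map $t\mapsto \rho(x(t),p(t))$ is uniformly continuous, since $\cV$ is locally Lipschitz and $x,p$ are Lipschitz with values in compact sets. Barb\u{a}lat's lemma therefore gives $\rho(x(t),p(t))\to 0$.

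We want to upgrade this to $\|\dot{x}(t)\|\to 0$, and this is the main obstacle. I would argue by contradiction. Suppose there are $\epsilon>0$ and $t_k\to\infty$ with $\|\cV(x(t_k),p(t_k))\|\ge\epsilon$. By compactness, pass to a subsequence with $(x(t_k),p(t_k))\to(\bar x,\bar p)$. Continuity of $\cV$ gives $\cV(\bar x,\bar p)\ne 0$, and (PC) then gives $\rho(\bar x,\bar p)>0$. But continuity of $\rho$ forces $\rho(\bar x,\bar p)=\lim_k \rho(x(t_k),p(t_k))=0$, a contradiction. The point to check is that the compact limit argument is legitimate, because $\mathbf{p}$ ranges in the compact image $\cH(\X\times\Q)$.

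\textbf{Step 3 (Nash).} With $\|\dot{x}\|\to0$, Assumption~\ref{ass:PM}.2 gives $\|p(t)-\cF(x(t))\|\to 0$. Let $\bar x$ be any limit point of $x(t_k)$. Along that subsequence $p(t_k)\to\cF(\bar x)$, and continuity gives $\rho(\bar x,\cF(\bar x))=0$. By \eqref{eq:NSrho} this means $\bar x\in\BR(\cF(\bar x))$, so $\bar x\in\NE(\cF)$.

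Since every limit point of $x(t)$ lies in the closed set $\NE(\cF)$, and $\X$ is compact, a standard contradiction argument yields $\dist(x(t),\NE(\cF))\to0$.
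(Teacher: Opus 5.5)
Your proposal is correct and takes essentially the same route the paper attributes to the cited proof of this theorem. That route is: integration by parts together with the CCW bound $\alpha_\PM$ and (PC) to get $\int_0^\infty \rho\,dt<\infty$; Barb\u{a}lat's lemma to get $\rho\to 0$; and then compactness, (PC), Assumption~\ref{ass:PM}.2 and \eqref{eq:NSrho} to obtain $\|\dot x\|\to 0$ and convergence to $\NE(\cF)$. Your compactness and continuity steps are legitimate here because the rule is locally Lipschitz and $\mathbf{p}$ takes values in the compact set $\cH(\X\times\Q)$.
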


Theorem~\ref{thm:potentialCCW} positions CCW payoff mechanisms as the dynamic extension of potential games, and these mechanisms form a convex cone that includes negative imaginary linear systems~\cite{Lanzon2008Stability-Robus}. Theorem~\ref{thm:convergence} guarantees evolutionary Nash equilibrium learning for every hybrid rule, but, as noted in~\cite{Martins_2025aa}, its proof relies on integration by parts and Barb\u{a}lat's lemma, so~\eqref{eq:convergence} is weaker than the Lyapunov stability available for potential games~\cite[Theorem~4.4]{Sandholm2001Potential-games}. The remainder of the paper removes this tradeoff for payoff mechanisms admitting a CCW storage function.

\section{From CCW Storage to Lyapunov Functions}
\label{sec:storage}

A system is dissipative in the sense of Willems~\cite{Willems1972Dissipative-dyn} with respect to a supply rate when it admits a storage function whose increase along trajectories never exceeds the supplied energy. Following~\cite[Proposition~VI.2]{Angeli2006Systems-With-Co}, the CCW property can be certified in this way, with supply rate $x'\dot{p}$.

\begin{definition}[\bf CCW storage function] \label{def:storage} A CCW storage pair for~\eqref{eq:PMa}--\eqref{eq:PMb} is formed by a continuously differentiable storage function $\cS: \X \times \Q \to \R$ and a continuous dissipation rate $\cD: \X \times \Q \to \R_{\geq 0}$ such that, for every $\mathbf{x}$ in $\sX$ and every $q_0$ in $\Q$, the solution of~\eqref{eq:PMa}--\eqref{eq:PMb} satisfies, for almost every $t \geq 0$,
\begin{equation} \label{eq:storage}
\tfrac{d}{dt} \cS\big(x(t),q(t)\big) \leq x'(t)\dot{p}(t) - \cD\big(x(t),q(t)\big).
\end{equation}
Both sides of~\eqref{eq:storage} exist for almost every $t$ because $\mathbf{x}$ and $\mathbf{q}$ are Lipschitz continuous. Notice that~\eqref{eq:storage} implies that~\eqref{eq:PMa}--\eqref{eq:PMb} is nonlinear negative imaginary as defined in~\cite{Ghallab2018Extending-Negat}.
\end{definition}

\begin{proposition} \label{prop:storageCCW} If~\eqref{eq:PMa}--\eqref{eq:PMb} admits a CCW storage pair $(\cS,\cD)$, then $\PM$ is CCW for every initial state $q_0$ in $\Q$, with $\alpha_\PM \leq \max_{x \in \X} \cS(x,q_0) - \min_{(x,q) \in \X \times \Q} \cS(x,q)$.
\end{proposition}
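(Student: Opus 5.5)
The plan is to integrate the dissipation inequality~\eqref{eq:storage} over $[0,T]$ and then bound the storage terms uniformly. Fix $q_0 \in \Q$, an input $\mathbf{x} \in \sX$ and $T>0$, and let $q(\cdot)$ and $p(\cdot)$ be the corresponding solution and output of~\eqref{eq:PMa}--\eqref{eq:PMb}.

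\emph{Step 1: absolute continuity.} Since $\mathbf{x}$ is Lipschitz and $\cG$ is locally Lipschitz on the compact set $\X \times \Q$, the trajectory $q$ is Lipschitz, with $\|\dot q\|$ bounded by $\max_{\X \times \Q}\|\cG\|$. Forward invariance of $\Q$ from Assumption~\ref{ass:PM}.1 guarantees that $(x(t),q(t))$ stays in $\X\times\Q$. Because $\cS$ is $C^1$ on the compact set $\X\times\Q$, the map $t \mapsto \cS(x(t),q(t))$ is Lipschitz and hence absolutely continuous. By the same reasoning, $p = \cH(x,q)$ is Lipschitz, because $\cH$ is $C^1$. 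Consequently, the fundamental theorem of calculus applies to $\cS(x(t),q(t))$, and $x'\dot p$ is bounded and integrable on $[0,T]$.

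\emph{Step 2: integrate and bound.} Integrating~\eqref{eq:storage} over $[0,T]$ and using $\cD \geq 0$ gives
\begin{equation*}
\int_0^T \dot p'(t)x(t)\,dt \;\geq\; \cS\big(x(T),q(T)\big) - \cS\big(x(0),q_0\big) + \int_0^T \cD\big(x(t),q(t)\big)\,dt \;\geq\; \min_{(x,q)\in\X\times\Q}\cS(x,q) - \max_{x\in\X}\cS(x,q_0).
\end{equation*}
Both extrema exist and are finite, because $\cS$ is continuous and the sets $\X \times \Q$ and $\X$ are compact. The right-hand side depends only on $q_0$, not on $T$ or $\mathbf{x}$.

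\emph{Step 3: conclude.} Taking the infimum over $T>0$ and $\mathbf{x}\in\sX$ and then negating yields
\begin{equation*}
\alpha_\PM \leq \max_{x \in \X} \cS(x,q_0) - \min_{(x,q) \in \X \times \Q} \cS(x,q) < \infty,
\end{equation*}
so $\PM$ is CCW in the sense of Definition~\ref{def:CCW}. Since $q_0 \in \Q$ was arbitrary, this holds for every initial state.

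The only delicate point is Step 1. An almost-everywhere derivative inequality integrates correctly only when the function is absolutely continuous. Here this follows from Lipschitz continuity, which is inherited from the compactness of $\X\times\Q$ and the $C^1$ regularity of $\cS$ and $\cH$. The remaining steps are routine.
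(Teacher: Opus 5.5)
Your proof is correct and follows the same route as the paper: integrate \eqref{eq:storage} over $[0,T]$, drop the nonnegative dissipation term, and bound $\cS\big(x(T),q(T)\big) - \cS\big(x(0),q_0\big)$ from below by $\min_{\X\times\Q}\cS - \max_{x\in\X}\cS(x,q_0)$, using continuity on compact sets. Your Step~1 on absolute continuity, via Lipschitz trajectories and the $C^1$ regularity of $\cS$ and $\cH$, spells out a justification the paper leaves implicit.
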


\paragraph*{\bf Proof} Integrating~\eqref{eq:storage} and using $\cD \geq 0$ gives $\int_0^T \dot{p}'x \, dt \geq \cS\big(x(T),q(T)\big) - \cS\big(x(0),q_0\big)$. The bound follows because $\cS$ is continuous on the compact set $\X \times \Q$.~$\square$

\subsection{Candidate Lyapunov Function}

In~\cite[Theorem~6]{Angeli2006Systems-With-Co}, a candidate Lyapunov function for the positive feedback interconnection of two systems with storage functions $V_1$ and $V_2$ and outputs $h_1$ and $h_2$ is $V_1 + V_2 - h_1'h_2$. In our setting, the EDM plays the role of a CCW system with a trivial storage function and with dissipation rate given by the correlation function, which suggests 
\begin{equation} \label{eq:Lyap}
\cL(x,q) := \cS(x,q) - \cH(x,q)'x + C, \quad (x,q) \in \X \times \Q,
\end{equation}
where $C$ is a constant. The function $\cL$ is continuously differentiable and, as the next lemma shows, its derivative along the closed loop combines the dissipation of the payoff mechanism with the correlation function of the rule.

\begin{lemma} \label{lem:Ldot} Let $(\cS,\cD)$ be a CCW storage pair for~\eqref{eq:PMa}--\eqref{eq:PMb} and let $\cT$ satisfy (PC). Along every solution of the feedback interconnection in Fig.~\ref{fig:closedloop},
\begin{equation} \label{eq:Ldot}
\tfrac{d}{dt}\cL\big(x(t),q(t)\big) \leq - \cD\big(x(t),q(t)\big) - \rho\big(x(t),p(t)\big) \leq 0.
\end{equation}
\end{lemma}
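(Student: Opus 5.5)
The plan is a direct computation of $\tfrac{d}{dt}\cL$ along closed-loop solutions, splitting $\cL$ into its storage part and its coupling part $-p'x$. First note that along any closed-loop solution, $\mathbf{x}\in\sX$ and $\mathbf{q}$ is Lipschitz, since $\X\times\Q$ is compact and forward invariant and the right-hand sides are locally Lipschitz. Moreover, because $\cH$ is continuously differentiable, $p(t)=\cH(x(t),q(t))$ is Lipschitz. In fact, since $\cV$ and $\cG$ are continuous, $x$ and $q$ are $C^1$, so $p$ is $C^1$ as well. Hence the chain rule applies and every derivative below exists for every $t$; at worst it exists almost everywhere, which is all that \eqref{eq:storage} provides.

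Next differentiate the coupling term. Since $\cH(x(t),q(t))'x(t)=p'(t)x(t)$, the product rule gives
\begin{equation*}
\tfrac{d}{dt}\big(p'(t)x(t)\big)=\dot p'(t)x(t)+p'(t)\dot x(t).
\end{equation*}
By \eqref{eq:EDMa}, $\dot x(t)=\cV(x(t),p(t))$. Therefore $p'(t)\dot x(t)=\rho(x(t),p(t))$ by the definition \eqref{eq:correlation} of the correlation function.

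Then combine this with the storage inequality. The solution of the closed loop is in particular a solution of \eqref{eq:PMa}--\eqref{eq:PMb} driven by the input $\mathbf{x}\in\sX$, so \eqref{eq:storage} holds along it. This gives, for almost every $t$,
\begin{equation*}
\tfrac{d}{dt}\cL=\tfrac{d}{dt}\cS-\dot p'x-\rho\le x'\dot p-\cD-\dot p'x-\rho=-\cD-\rho .
\end{equation*}
The supply-rate term $x'\dot p$ cancels exactly against the term coming from the product rule; this cancellation is precisely the Angeli-style design of \eqref{eq:Lyap}.

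Finally, establish nonpositivity. We have $\cD\ge0$ by Definition~\ref{def:storage}. Under (PC), either $\cV(x,p)=0$, in which case $\rho=0$, or $\rho>0$; so $\rho\ge0$ in both cases, and the right inequality in \eqref{eq:Ldot} follows. The only delicate point is the almost-everywhere qualification of \eqref{eq:storage}, which the inequality inherits. If an everywhere statement is desired, it would follow from continuity of the right-hand side together with $\cL$ being $C^1$ along a $C^1$ trajectory, since an a.e. inequality between a continuous derivative and a continuous bound extends to all $t$. There is no substantial obstacle: the proof is essentially bookkeeping once the cancellation is observed.
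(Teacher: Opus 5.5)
Your proof is correct and follows the paper's argument. The steps match: the product rule on $p'x$, the substitution $\dot x=\cV(x,p)$ to produce $\rho$, the cancellation of $x'\dot p$ against the supply rate in \eqref{eq:storage}, and nonpositivity from $\cD\ge 0$ and (PC). Your extra remark, upgrading the almost-everywhere inequality to every $t$ because both sides are continuous along $C^1$ trajectories, is a valid refinement that the paper leaves implicit.
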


\paragraph*{\bf Proof} Closed-loop solutions are continuously differentiable and $\mathbf{x}$ belongs to $\sX$, so~\eqref{eq:storage} applies. The product rule gives $\frac{d}{dt}\big(p'x\big) = \dot{p}'x + p'\dot{x} = \dot{p}'x + \rho(x,p)$, by~\eqref{eq:EDMa} and~\eqref{eq:correlation}. Subtracting this identity from~\eqref{eq:storage} yields the first inequality, and the second follows from $\cD \geq 0$ and (PC). $\square$

The derivative computation in Lemma~\ref{lem:Ldot} is automatic once a CCW storage pair is available. The delicate part is to select $\cS$ and $C$ so that $\cL$ is nonnegative and vanishes exactly on the equilibria of interest, which we specify in the set
\begin{multline} \label{eq:Eqset}
\Eq := \big\{ (x,q) \in \X \times \Q \ | \ x \in \NE(\cF), \\ \cG(x,q) = 0, \ \cH(x,q) = \cF(x) \big\}.
\end{multline}

\begin{theorem} \label{thm:Lyapunov} Let $\cT$ be well-behaved, and $\PM$ satisfy Assumption~\ref{ass:PM} and admit a CCW storage pair $(\cS,\cD)$ such that:
\begin{itemize}
\item[\textbf{(a)}] for all $(x,q)$ in $\X \times \Q$, $\cD(x,q) = 0$ implies $\cG(x,q) = 0$ and $\cH(x,q) = \cF(x)$;
\item[\textbf{(b)}] there is a constant $C$ such that $\cL$ in~\eqref{eq:Lyap} satisfies $\cL(x,q) \geq 0$ for all $(x,q)$ in $\X \times \Q$, with equality if and only if $(x,q) \in \Eq$.
\end{itemize}
It holds that $\Eq$ is a nonempty compact set of equilibria of the feedback interconnection, and it is globally asymptotically stable. Namely, for every $\epsilon>0$ there is $\delta>0$ such that $\dist\big((x(0),q_0),\Eq\big) < \delta$ implies $\dist\big((x(t),q(t)),\Eq\big) < \epsilon$ for all $t \geq 0$, and $\lim_{t \to \infty}\dist\big((x(t),q(t)),\Eq\big) = 0$ for every initial condition in $\X \times \Q$. In particular,~\eqref{eq:convergence} holds.
\end{theorem}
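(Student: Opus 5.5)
We follow the standard LaSalle-type argument on the compact forward-invariant set $\X \times \Q$, with $\cL$ from~\eqref{eq:Lyap} as the Lyapunov function.

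\emph{Step 1: $\Eq$ is a nonempty compact set of equilibria.} Compactness follows because $\Eq$ is the zero set $\cL^{-1}(0)$ of the continuous $\cL$ on the compact set $\X \times \Q$, by (b). To see that each point of $\Eq$ is an equilibrium, take $(x,q) \in \Eq$. Then $\cG(x,q)=0$. Also $p=\cH(x,q)=\cF(x)$ and $x \in \BR(\cF(x))$, so (NS) gives $\cV(x,p)=0$. For nonemptiness, take any closed-loop solution. By Theorem~\ref{thm:convergence} (or Step 3 below) it has a nonempty $\omega$-limit set, and we will show this set lies in $\Eq$. Alternatively, nonemptiness comes out of the invariance argument directly.

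\emph{Step 2: stability.} By Lemma~\ref{lem:Ldot}, $\cL$ is nonincreasing along solutions. Fix $\epsilon>0$ and set $m:=\min\{\cL(z) \mid z \in \X\times\Q,\ \dist(z,\Eq)\ge\epsilon\}$. This minimum is positive by compactness and (b), provided the set is nonempty; otherwise the claim is trivial. By uniform continuity of $\cL$ and $\cL|_\Eq=0$, choose $\delta$ with $\dist(z,\Eq)<\delta \Rightarrow \cL(z)<m$. Monotonicity of $\cL$ then keeps $\cL<m$, and hence $\dist<\epsilon$, for all $t\ge0$.

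\emph{Step 3: attractivity.} This is the main step. Since $\cL \ge 0$ is nonincreasing, integrating~\eqref{eq:Ldot} gives
\begin{equation*}
\int_0^\infty \big(\cD + \rho\big)\,dt < \infty .
\end{equation*}
The closed-loop vector field is locally Lipschitz on a compact invariant set, so by LaSalle's invariance principle the $\omega$-limit set $\Omega$ is nonempty, compact and invariant. Moreover $\cL$ is constant on $\Omega$, say $\cL \equiv c$.

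On any solution lying in $\Omega$, the derivative of $\cL$ vanishes, so~\eqref{eq:Ldot} forces $\cD=0$ and $\rho=0$. Since the storage inequality is only an inequality, we must argue carefully. The bound $0=\dot\cL\le-\cD-\rho\le 0$ gives $\cD=\rho=0$ along the trajectory almost everywhere, and hence everywhere by continuity. Then:
\begin{itemize}
\item (a) yields $\cG(x,q)=0$ and $p=\cH(x,q)=\cF(x)$;
\item~\eqref{eq:NSrho} yields $x\in\BR(p)=\BR(\cF(x))$, that is, $x\in\NE(\cF)$.
\end{itemize}
Hence $\Omega\subset\Eq$. This gives nonemptiness of $\Eq$ and $\dist((x(t),q(t)),\Eq)\to 0$.

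The subtle point is that invariance of $\Omega$ requires continuity of solutions with respect to initial data. Here the closed loop is an autonomous locally Lipschitz ODE on $\X\times\Q$, so this is standard, and $\rho$ is evaluated at $p=\cH(x,q)$, which is continuous.

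\emph{Step 4: consequence~\eqref{eq:convergence}.} Convergence to $\Eq$ gives $\dist(x(t),\NE(\cF))\to0$, since the projection of $\Eq$ lies in $\NE(\cF)$. For the velocity, $\dot x=\cV(x,\cH(x,q))$ is uniformly continuous on the compact set and vanishes on $\Eq$ by Step 1, so $\|\dot x(t)\|\to 0$.
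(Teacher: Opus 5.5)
Your proof is correct and follows the paper's argument: equilibria via (NS), stability via the sublevel-set argument with a positive minimum of $\cL$ off an $\epsilon$-neighborhood of $\Eq$, attractivity via LaSalle with $\cD=\rho=0$ forcing $x \in \NE(\cF)$, and the velocity limit via continuity of $\cV$ and $\cH$. The only cosmetic difference is that you obtain compactness of $\Eq$ as the zero set $\cL^{-1}(0)$ using (b), whereas the paper uses closedness of $\NE(\cF)$ and continuity of $\cG$, $\cH$ and $\cF$; both are valid.
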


\paragraph*{\bf Proof} If $(x,q) \in \Eq$ then $\dot{q} = \cG(x,q) = 0$, $p = \cF(x)$ and $x \in \BR(p)$, so $\cV(x,p) = 0$ by (NS). Hence, every point of $\Eq$ is an equilibrium. The set $\Eq$ is closed, because $\NE(\cF)$ is closed and $\cG$, $\cH$ and $\cF$ are continuous, and it is bounded, so it is compact.

\emph{Stability.} Fix $\epsilon>0$ and let $K_\epsilon := \{z \in \X \times \Q \ | \ \dist(z,\Eq) \geq \epsilon\}$. If $K_\epsilon$ is nonempty, it is compact and, by (b), $m := \min_{z \in K_\epsilon} \cL(z) > 0$. Since $\cL$ is continuous and vanishes on the compact set $\Eq$, there is $\delta \in (0,\epsilon)$ such that $\dist(z,\Eq) < \delta$ implies $\cL(z) < m$. By Lemma~\ref{lem:Ldot}, a solution starting at distance less than $\delta$ from $\Eq$ keeps $\cL < m$ and never enters $K_\epsilon$.

\emph{Attractivity.} The set $\X \times \Q$ is compact and forward invariant, and $\cL$ is continuously differentiable with nonpositive derivative along solutions. By LaSalle's invariance principle~\cite{Khalil1995Nonlinear-syste}, every solution converges to the largest invariant set $M$ contained in the set where the derivative of $\cL$ vanishes, which by~\eqref{eq:Ldot} is contained in $Z := \{(x,q) \ | \ \cD(x,q) = 0 \text{ and } \rho\big(x,\cH(x,q)\big) = 0\}$. If $(x,q) \in Z$, then (a) gives $\cG(x,q) = 0$ and $\cH(x,q) = \cF(x)$, and~\eqref{eq:NSrho} gives $x \in \BR\big(\cF(x)\big)$, that is, $x \in \NE(\cF)$. Hence $Z \subset \Eq$. Since $\Eq$ consists of equilibria, it is invariant and contained in the set where the derivative of $\cL$ vanishes, so $M = \Eq$, which is nonempty because limit sets of solutions in a compact set are nonempty. Finally,~\eqref{eq:convergence} follows from the convergence to a compact set of equilibria contained in $\NE(\cF) \times \Q$ and the continuity of $\cV$ and $\cH$. $\square$

\begin{remark}[\bf Rule independence] \label{rem:ruleindep} The Lyapunov function $\cL$ depends only on the payoff mechanism, so the search for a Lyapunov function reduces to the search for a CCW storage pair. The rule enters~\eqref{eq:Ldot} only through its correlation function $\rho$, which is nonnegative and satisfies~\eqref{eq:NSrho} for every well-behaved rule, and which need not be known. Thus, once a CCW storage pair satisfying (a) and (b) is found, Theorem~\ref{thm:Lyapunov} certifies global asymptotic stability for every hybrid rule, including rules with an arbitrarily large imitative component, as in Example~\ref{ex:hybrid}. In contrast, Lyapunov functions built from $\delta$-passivity~\cite{Fox2013Population-Game,Park2019From-Population,Certorio_2024aa} require a storage function for the rule, which is unavailable for the replicator and other imitative rules~\cite{Park2018Passivity-and-e,Martins_2025aa}.
\end{remark}

\begin{remark}[\bf Potential games] \label{rem:potential} Let $\PM$ be a potential game with potential $f$ and no state, so that $\cH = \cF$. Then $\cS(x) := \fmax - f(x) + \cF(x)'x$, with $\fmax := \max_{z \in \X} f(z)$ and $\cD \equiv 0$, is a CCW storage pair, since~\eqref{eq:PotentialGame} gives $\frac{d}{dt}\cS = -\cF'\dot{x} + \dot{p}'x + p'\dot{x} = \dot{p}'x$. With $C=0$, the candidate~\eqref{eq:Lyap} is $\cL(x) = \fmax - f(x)$, condition (a) holds trivially and (b) holds when $f$ is concave, since then the maximizers of $f$ form $\NE(\cF)$, so Theorem~\ref{thm:Lyapunov} recovers the Lyapunov stability of concave potential games~\cite[Theorem~4.4]{Sandholm2001Potential-games}.
\end{remark}

\section{Anticipatory Action and Bregman Divergence}
\label{sec:anticipatory}

Anticipatory learning, in which agents respond to a forecast of the payoffs rather than to their current payoffs, was proposed in~\cite{Shamma2005Dynamic-fictiti,Arslan2006Anticipatory-le} as a form of lead compensation, and linear anticipatory payoff modifications were shown to preserve $\delta$-passivity in~\cite{Fox2013Population-Game} and CCW in~\cite[Example~6]{Martins_2025aa} and~\cite{Hankins_2025aa}. Here we consider a nonlinear class in which the forecast is built from a smoothed version of $\mathbf{x}$ denoted as $\mathbf{q}$.

\subsection{The Class of Anticipatory Payoff Mechanisms}

Let $\mathbb{U}$ be an open convex set containing $\X$ and let $h, g: \mathbb{U} \to \R$ be twice continuously differentiable, with $H := \nabla h$ and $G := \nabla g$. We consider payoff mechanisms of the form
\begin{subequations}
\begin{align}
\dot{q}(t) &= \lambda\big(x(t),q(t)\big)\big(x(t) - q(t)\big), \quad q(0) = q_0 \in \X, \tag{APMa} \label{eq:APMa}\\
p(t) &= H\big(x(t)\big) - G\big(q(t)\big), \tag{APMb} \label{eq:APMb}
\end{align}
\end{subequations}
where $\lambda: \X \times \X \to \R$ is locally Lipschitz continuous and satisfies $\lambda(x,q) \geq \lambda_{\min} > 0$. The state $q$ is a smoothed population state. In general, one can use $\lambda(x,q)$ to capture state-dependent modulation of $\dot{q}$, such as when $\lambda(x,q) = \lambda_0 + \lambda_1\|x-q\|^2$ models smoothing that speeds up when the discrepancy between $q$ and $x$ is large. Defining ${\cF(x) := H(x) - G(x)}$, the output can be written as
\begin{equation} \label{eq:pdecomp}
p = \underbrace{\cF(x)}_{\pbar} + \underbrace{G(x) - G(q)}_{\ptil},
\end{equation}
where $\ptil$ compares the payoff component $G$ at the current and at the smoothed population states, and vanishes whenever $q = x$. The stationary game is a potential game with potential
\begin{equation} \label{eq:potential}
f := h - g, \quad \cF = \nabla f, \quad \fmax := \max_{z \in \X} f(z),
\end{equation}
because $\frac{d}{dt}f\big(x(t)\big) = \dot{x}'\cF(x)$ for every $\mathbf{x}$ in $\sX$, so that~\eqref{eq:PotentialGame} holds. An important special case is obtained with ${H := (1+k)\cF}$ and $G := k\cF$ for a potential game $\cF = \nabla f$ and a gain $k>0$, in which case
\begin{equation} \label{eq:APMk}
p = \cF(x) + k\big(\cF(x) - \cF(q)\big), \ h = (1+k)f, \ g = kf.
\end{equation}

\begin{lemma} \label{lem:wellposed} The payoff mechanism~\eqref{eq:APMa}--\eqref{eq:APMb} satisfies Assumption~\ref{ass:PM} with $\Q = \X$ and stationary game $\cF = H - G$.
\end{lemma}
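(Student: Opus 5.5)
We verify the two items of Assumption~\ref{ass:PM} and the regularity requirements for~\eqref{eq:PMa}--\eqref{eq:PMb}, taking $\Q=\X$, $\cG(x,q):=\lambda(x,q)(x-q)$ and $\cH(x,q):=H(x)-G(q)$.

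\emph{Regularity.} The map $\cG$ is locally Lipschitz as a product of locally Lipschitz maps. The map $\cH$ is continuously differentiable on the open set $\mathbb{U}\times\mathbb{U}\supset\X\times\X$, because $h,g$ are $C^2$. Hence $\cF=H-G$ is $C^1$ on $\mathbb{U}$, and therefore Lipschitz on the compact convex set $\X$.

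\emph{Item 1 (forward invariance of $\X$).} Fix $\mathbf{x}\in\sX$ and $q_0\in\X$. The sum constraint is immediate: $\frac{d}{dt}\mathbf{1}'q=\lambda(\mathbf{1}'x-\mathbf{1}'q)$, so $\mathbf{1}'q-1$ solves a linear scalar ODE with zero initial value and stays at zero. For nonnegativity, extend $\lambda$ to a locally Lipschitz function on $\X\times\R^n$ and note that $\dot q_i=\lambda(x_i-q_i)\ge -\lambda q_i$, since $x_i\ge 0$ and $\lambda>0$. By a comparison argument, $q_i(t)\ge q_i(0)\exp(-\int_0^t\lambda)\ge0$. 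The same argument gives $q_i\le 1$, using $\dot q_i\le\lambda(1-q_i)$.

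A subtlety here is that $\lambda$ is only defined on $\X\times\X$. The clean route is a Nagumo-type tangency argument: at boundary points of $\X$ the vector field $\lambda(x-q)$ points into the tangent cone, because $x-q$ is a difference of two points of the convex set $\X$. This gives invariance directly, and global existence follows from compactness. It is the main technical point of Item~1, though it is routine.

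\emph{Item 2 (stationary game).} We expect this to be the main step. Suppose $\|\dot x(t)\|\to0$. Since $p-\cF(x)=G(x)-G(q)$ and $G$ is Lipschitz on $\X$, it suffices to show that $e:=x-q\to0$. Along solutions,
\begin{equation*}
\dot e=\dot x-\lambda(x,q)\,e ,
\end{equation*}
so
\begin{equation*}
\tfrac{d}{dt}\tfrac12\|e\|^2=e'\dot x-\lambda\|e\|^2\le -\lambda_{\min}\|e\|^2+\|e\|\|\dot x\|.
\end{equation*}
Wherever $e\neq0$, this gives $\frac{d}{dt}\|e\|\le-\lambda_{\min}\|e\|+\|\dot x\|$ almost everywhere. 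The function $\|e\|$ is Lipschitz, and the case $e=0$ can be handled through the squared norm. By the comparison lemma,
\begin{equation*}
\|e(t)\|\le e^{-\lambda_{\min}t}\|e(0)\|+\int_0^t e^{-\lambda_{\min}(t-s)}\|\dot x(s)\|\,ds .
\end{equation*}
This is an input-to-state-stability estimate with a vanishing input. Given $\epsilon>0$, pick $T$ such that $\|\dot x(s)\|<\epsilon$ for $s\ge T$. Then the tail of the integral is at most $\epsilon/\lambda_{\min}$. The part of the integral over $[0,T]$ is bounded by $e^{-\lambda_{\min}(t-T)}T\sup\|\dot x\|$, which decays; $\dot x$ is bounded because $\mathbf{x}$ is Lipschitz. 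Hence $\limsup_{t\to\infty}\|e(t)\|\le\epsilon/\lambda_{\min}$ for every $\epsilon>0$, so $e\to0$. Consequently $\|p-\cF(x)\|\le L_G\|e\|\to0$, where $L_G$ is a Lipschitz constant of $G$ on $\X$.
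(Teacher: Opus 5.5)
Your proposal is correct and follows essentially the same route as the paper. You obtain the sum constraint from the scalar linear ODE for $\mathbf{1}'q-1$ and nonnegativity from the comparison $\dot q_i\geq-\lambda q_i$. You then get $x-q\to 0$ from $\dot v=\dot x-\lambda v$ with $\lambda\geq\lambda_{\min}>0$, and finish with continuity of $G$ on $\X$.

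The paper compresses the convergence step into one line and uses uniform rather than Lipschitz continuity of $G$. You instead spell out the input-to-state estimate and handle the regularity and domain-of-$\lambda$ details. That is fine, provided your extension of $\lambda$ is chosen to stay positive, for example by composing with the Euclidean projection of $q$ onto $\X$.
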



\paragraph*{\bf Proof}
Let $\mathbf{x} \in \sX$ and $q_0 \in \X$.
The function $\sigma(t) := \mathbf{1}'q(t) - 1$ satisfies
$\dot{\sigma} = -\lambda\sigma$ and $\sigma(0) = 0$,
hence $\sigma \equiv 0$, and
$\dot{q}_i \geq -\lambda q_i$ gives
$q_i(t) \geq q_i(0)\exp\big(-\int_0^t \lambda \, ds\big) \geq 0$,
so $\X$ is forward invariant.
The mismatch $v := x - q$ satisfies
$\dot{v} = \dot{x} - \lambda v$, so
$\|\dot{x}(t)\| \to 0$ implies $v(t) \to 0$
because $\lambda \geq \lambda_{\min}>0$, and then
$p - \cF(x) = G(x) - G(q) \to 0$
by the uniform continuity of $G$ on $\X$.
$\square$

We now state the assumption under which the anticipatory mechanism admits a CCW storage function. Concavity is understood on $\X$ in the tangential sense.

\begin{assumption} \label{ass:concave} The functions $g$ and $f = h - g$ in~\eqref{eq:APMb} and~\eqref{eq:potential} satisfy:
\begin{enumerate}
\item The function $g$ is strictly concave on $\X$, that is, $v'\nabla^2 g(q) v < 0$ for all $q$ in $\X$ and $v$ in $\TX \setminus \{0\}$.
\item The potential $f$ is concave\footnote{See \S\ref{sec:nonconcave} for how to relax this condition. In the special case~\eqref{eq:APMk}, Assumption~\ref{ass:concave} reduces to strict concavity of $f$ on $\X$.} on $\X$.
\end{enumerate}
\end{assumption}

\subsection{Bregman Divergence}

The Bregman divergence~\cite{Bregman1967A-Relaxation-Me} generated by a differentiable convex function $\psi$ is $\psi(x) - \psi(q) - \nabla\psi(q)'(x-q)$, the gap at $x$ between $\psi$ and its tangent plane at $q$. We use the convex generator $\psi := -g$, which gives
\begin{equation} \label{eq:Bregman}
\Breg(x,q) := g(q) - g(x) + G(q)'(x-q), \quad x, q \in \X.
\end{equation}

\begin{lemma} \label{lem:Bregman} Under Assumption~\ref{ass:concave}.1, the following hold:
\begin{itemize}
\item[\textbf{(a)}] $\Breg(x,q) \geq 0$ for all $x$, $q$ in $\X$, with equality if and only if $x = q$;
\item[\textbf{(b)}] if $\mathbf{x} \in \sX$ and $\mathbf{q}:[0,\infty) \to \X$ is Lipschitz continuous, then for almost every $t \geq 0$
\begin{equation} \label{eq:Bregdot}
\tfrac{d}{dt}\Breg(x,q) = -\big(\underbrace{G(x) - G(q)}_{\tilde{p}}\big)'\dot{x} + \dot{q}'\nabla^2 g(q)(x-q).
\end{equation}
\end{itemize}
\end{lemma}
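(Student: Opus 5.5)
For part (a), I will use Taylor's theorem with integral remainder along the segment from $q$ to $x$. This segment lies in $\X$ by convexity, and its direction $v := x-q$ lies in $\TX$. Writing $\gamma(s) := q + s v$ for $s\in[0,1]$, the second-order expansion of $g$ about $q$ gives
\begin{equation*}
g(x) = g(q) + G(q)'v + \int_0^1 (1-s)\, v'\nabla^2 g\big(\gamma(s)\big) v \, ds .
\end{equation*}
Hence
\begin{equation*}
\Breg(x,q) = -\int_0^1 (1-s)\, v'\nabla^2 g\big(\gamma(s)\big) v \, ds .
\end{equation*}
By Assumption~\ref{ass:concave}.1, the integrand satisfies $v'\nabla^2 g(\gamma(s))v<0$ for every $s$ whenever $v\neq 0$. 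The integral is therefore strictly negative when $x\neq q$, so $\Breg(x,q)>0$. When $x=q$ we have $v=0$, and the divergence is trivially zero. This proves nonnegativity and the equality characterization. The only care needed is that $\gamma(s)\in\X$, so that the tangential concavity hypothesis is actually applicable along the whole segment, and that $g$ is $C^2$ on the open set $\mathbb{U}\supset\X$, which justifies the Taylor formula.

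For part (b), $g$ and $G$ are $C^1$ on $\mathbb{U}$, and $\mathbf{x}$ and $\mathbf{q}$ are Lipschitz, hence absolutely continuous and differentiable almost everywhere. Therefore $t\mapsto \Breg(x(t),q(t))$ is a composition of a $C^1$ map with Lipschitz curves, and at almost every $t$ the chain rule applies. I will differentiate the three terms of~\eqref{eq:Bregman} separately:
\begin{itemize}
\item $\tfrac{d}{dt} g(q) = G(q)'\dot{q}$;
\item $\tfrac{d}{dt} g(x) = G(x)'\dot{x}$;
\item $\tfrac{d}{dt}\big(G(q)'(x-q)\big) = \dot{q}'\nabla^2 g(q)(x-q) + G(q)'(\dot{x}-\dot{q})$, where I use that $\nabla^2 g$ is symmetric.
\end{itemize}
Summing these, the terms $G(q)'\dot{q}$ cancel, and what remains is
\begin{equation*}
-\big(G(x)-G(q)\big)'\dot{x} + \dot{q}'\nabla^2 g(q)(x-q),
\end{equation*}
which is exactly~\eqref{eq:Bregdot}.

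I expect no serious obstacle in either part. The only subtle point is the strictness in (a), which relies on restricting to tangent directions; the argument handles this because $x-q\in\TX$ for $x,q\in\X$.
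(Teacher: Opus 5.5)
Your proposal is correct and matches the paper's proof: in (a) you use Taylor's formula with integral remainder along the segment from $q$ to $x$, whose direction lies in $\TX$, and in (b) you differentiate the three terms of the divergence with the chain rule and observe that the $G(q)'\dot{q}$ terms cancel. One small precision: the factor $(1-s)$ makes the integrand vanish at $s=1$, so the integrand is strictly negative only on $[0,1)$, which is still enough for the strict inequality.
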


\paragraph*{\bf Proof} (a) Since $\X$ is convex, the segment from $q$ to $x$ lies in $\X$, and Taylor's formula with integral remainder gives
\begin{equation*}
\Breg(x,q) = -\int_0^1 (1-s)\,(x-q)'\nabla^2 g\big(q + s(x-q)\big)(x-q)\, ds.
\end{equation*}
As $x - q \in \TX$, the integrand is nonpositive and is negative when $x \neq q$ and $0 \leq s < 1$. (b) By the chain rule, $\frac{d}{dt}\big(g(q) - g(x)\big) = G(q)'\dot{q} - G(x)'\dot{x}$ and ${\frac{d}{dt}\big(G(q)'(x-q)\big) = \dot{q}'\nabla^2 g(q)(x-q) + G(q)'(\dot{x} - \dot{q})}$. Adding the two identities gives~\eqref{eq:Bregdot}. $\square$

The first term in~\eqref{eq:Bregdot} is $-\ptil'\dot{x}$, which cancels one of the cross terms that appears when differentiating $\ptil'x$. This cancellation is why the Bregman divergence is the natural energy of the anticipatory term.

\subsection{CCW Storage Function and Lyapunov Function}

\begin{theorem} \label{thm:storageAPM} Let Assumption~\ref{ass:concave}.1 hold and define, with $\pbar$ and $\ptil$ as in~\eqref{eq:pdecomp},
\begin{align}
\cS(x,q) &:= \underbrace{\fmax - f(x) + \pbar'x}_{\text{storage for } \cF} + \underbrace{\Breg(x,q) + \ptil'x}_{\text{storage for } \ptil}, \label{eq:SAPM}\\
\cD(x,q) &:= \lambda(x,q)\,(x-q)'\big(-\nabla^2 g(q)\big)(x-q). \label{eq:DAPM}
\end{align}
Then $(\cS,\cD)$ is a CCW storage pair for~\eqref{eq:APMa}--\eqref{eq:APMb}, and~\eqref{eq:storage} holds with equality. Consequently, the anticipatory payoff mechanism is CCW for every $q_0$ in $\X$.
\end{theorem}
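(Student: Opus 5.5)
We verify \eqref{eq:storage} with equality by direct differentiation along an arbitrary Lipschitz input $\mathbf{x}\in\sX$ and the resulting solution $\mathbf{q}$ of \eqref{eq:APMa}. By Lemma~\ref{lem:wellposed}, $q(t)\in\X$ for all $t$, and $\mathbf{q}$ is Lipschitz, since $\dot q$ is bounded on the compact set $\X\times\X$. Hence Lemma~\ref{lem:Bregman}(b) applies, and all derivatives below exist for almost every $t$. Since $\pbar+\ptil=p$, the storage \eqref{eq:SAPM} can be regrouped as
\begin{equation*}
\cS(x,q)=\fmax-f(x)+p'x+\Breg(x,q).
\end{equation*}
This grouping is the one we differentiate.

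The key steps are as follows. First, by \eqref{eq:potential} and the chain rule, $\frac{d}{dt}\big(\fmax-f(x)\big)=-\pbar'\dot x$. Second, the product rule gives $\frac{d}{dt}(p'x)=\dot p'x+p'\dot x=\dot p'x+\pbar'\dot x+\ptil'\dot x$. Third, Lemma~\ref{lem:Bregman}(b) gives $\frac{d}{dt}\Breg=-\ptil'\dot x+\dot q'\nabla^2 g(q)(x-q)$. Adding the three, the terms $\pm\pbar'\dot x$ cancel, and so do $\pm\ptil'\dot x$; this is exactly the cancellation highlighted after Lemma~\ref{lem:Bregman}. What remains is
\begin{equation*}
\tfrac{d}{dt}\cS=x'\dot p+\dot q'\nabla^2 g(q)(x-q).
\end{equation*}
Substituting \eqref{eq:APMa}, $\dot q=\lambda(x,q)(x-q)$, and using the symmetry of the Hessian, the last term equals $\lambda(x,q)(x-q)'\nabla^2 g(q)(x-q)=-\cD(x,q)$. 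Thus \eqref{eq:storage} holds with equality.

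It remains to check the regularity and sign requirements of Definition~\ref{def:storage}. The function $\cS$ is continuously differentiable on $\X\times\X$ because $h$ and $g$ are $C^2$, so $f$, $G$, $\Breg$ and $p$ are $C^1$. The function $\cD$ is continuous because $\lambda$ and $\nabla^2 g$ are continuous. Nonnegativity of $\cD$ follows from Assumption~\ref{ass:concave}.1, since $x-q\in\TX$ and $\lambda\ge\lambda_{\min}>0$. Finally, Proposition~\ref{prop:storageCCW} yields that the mechanism is CCW for every $q_0\in\X$.

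The main point requiring care is the bookkeeping of the cross terms, in particular making sure that the $-\ptil'\dot x$ term from the Bregman derivative matches the $\ptil'\dot x$ term from $\frac{d}{dt}(\ptil'x)$. A secondary point is the almost-everywhere justification for merely Lipschitz $\mathbf{x}$, which follows as in Lemma~\ref{lem:Bregman}(b) because compositions of $C^1$ maps with absolutely continuous functions are absolutely continuous. No concavity of $f$ is needed here; only Assumption~\ref{ass:concave}.1 is used, and only for $\cD\ge 0$.
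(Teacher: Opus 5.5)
Your proof is correct and follows essentially the same route as the paper: the same three derivative identities (chain rule on $f$, product rule, and Lemma~\ref{lem:Bregman}(b)) and the same cancellations of $\pm\pbar'\dot x$ and $\pm\ptil'\dot x$. The paper pairs $\pbar'x$ with $\fmax-f$ and $\ptil'x$ with $\Breg$, whereas you merge them into $p'x$; that regrouping is cosmetic, and your explicit regularity checks (Lipschitz $\mathbf{q}$, $\cS\in C^1$, continuity of $\cD$) are welcome additions the paper leaves implicit (the Hessian symmetry you invoke is not actually needed, since $\dot q'=\lambda(x,q)(x-q)'$ directly).
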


\paragraph*{\bf Proof} Let $\mathbf{x} \in \sX$. Using $\frac{d}{dt}f(x) = \pbar'\dot{x}$, we get
\begin{equation*}
\tfrac{d}{dt}\big(\fmax - f(x) + \pbar'x\big) = -\pbar'\dot{x} + \dot{\pbar}'x + \pbar'\dot{x} = \dot{\pbar}'x.
\end{equation*}
By Lemma~\ref{lem:Bregman}(b) and~\eqref{eq:APMa}, we obtain
\begin{align*}
\tfrac{d}{dt}\big(\Breg(x,q) + \ptil'x\big) &= -\ptil'\dot{x} + \dot{q}'\nabla^2 g(q)(x-q) \\
& \qquad + \dot{\ptil}'x + \ptil'\dot{x} = \dot{\ptil}'x - \cD(x,q).
\end{align*}
Adding the two identities and using $p = \pbar + \ptil$ gives $\frac{d}{dt}\cS = \dot{p}'x - \cD$. Since $x - q \in \TX$, Assumption~\ref{ass:concave}.1 and $\lambda \geq \lambda_{\min} > 0$ give $\cD \geq 0$. The CCW property follows from Proposition~\ref{prop:storageCCW}. $\square$

We can now apply the methodology of \S\ref{sec:storage}. Substituting~\eqref{eq:SAPM} and~\eqref{eq:pdecomp} into~\eqref{eq:Lyap} with $C = 0$, all the cross terms cancel and we obtain the candidate Lyapunov function
\begin{equation} \label{eq:LAPM}
\cL(x,q) = \fmax - f(x) + \Breg(x,q).
\end{equation}

\begin{theorem} \label{thm:mainAPM} Let Assumption~\ref{ass:concave} be satisfied and let $\cT$ be well-behaved. For the feedback interconnection of the EDM with~\eqref{eq:APMa}--\eqref{eq:APMb}, the following hold:
\begin{itemize}
\item[\textbf{(a)}] $\cL(x,q) \geq 0$ for all $(x,q)$ in $\X \times \X$, with equality if and only if $(x,q) \in \Eq = \{(x,x) \ | \ x \in \NE(\cF)\}$;
\item[\textbf{(b)}] along every solution, the derivative $\frac{d}{dt}\cL\big(x(t),q(t)\big)$ equals $-\cD\big(x(t),q(t)\big) - \rho\big(x(t),p(t)\big) \leq 0$;
\item[\textbf{(c)}] the set $\Eq$ is globally asymptotically stable.
\end{itemize}
Further, if $f$ is strictly concave on $\X$, then ${\NE(\cF) = \{x^*\}}$ and $(x^*,x^*)$ is globally asymptotically stable.
\end{theorem}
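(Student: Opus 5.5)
The plan is to reduce everything to Theorem~\ref{thm:Lyapunov} applied with the storage pair $(\cS,\cD)$ of Theorem~\ref{thm:storageAPM} and $C=0$. By Lemma~\ref{lem:wellposed}, Assumption~\ref{ass:PM} holds with $\Q=\X$, and by Theorem~\ref{thm:storageAPM}, $(\cS,\cD)$ is a CCW storage pair for which~\eqref{eq:storage} holds with equality. First I will verify~\eqref{eq:LAPM} directly. With $\cH(x,q)=\pbar+\ptil$, subtracting $\cH'x$ from~\eqref{eq:SAPM} cancels $\pbar'x+\ptil'x$ and leaves $\fmax-f(x)+\Breg(x,q)$. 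For part (b), I will repeat the computation in the proof of Lemma~\ref{lem:Ldot}, using the equality in~\eqref{eq:storage}. This gives $\frac{d}{dt}\cL=\dot p'x-\cD-(\dot p'x+\rho)=-\cD-\rho$, and $\rho\geq 0$ follows from (PC).

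For part (a), both terms of $\cL$ are nonnegative: $\fmax-f(x)\geq 0$ by definition, and $\Breg\geq 0$ by Lemma~\ref{lem:Bregman}(a). Hence $\cL=0$ if and only if $f(x)=\fmax$ and $x=q$. Since $f$ is concave on $\X$ (Assumption~\ref{ass:concave}.2) and $\cF=\nabla f$, the remark after Definition~\ref{def:NE} identifies the maximizers of $f$ with $\NE(\cF)$. If one wants to avoid that citation, the KKT argument works: for concave $f$, $x\in\BR(\nabla f(x))$ means $(z-x)'\nabla f(x)\le 0$ for all $z\in\X$, which is equivalent to global optimality. Next I must show that this zero set equals $\Eq$ as defined in~\eqref{eq:Eqset}. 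If $x=q$, then $\cG(x,q)=\lambda(x-q)=0$ and $\cH(x,x)=H(x)-G(x)=\cF(x)$. Conversely, $\cG(x,q)=0$ together with $\lambda\geq\lambda_{\min}>0$ forces $q=x$. Therefore $\Eq=\{(x,x)\mid x\in\NE(\cF)\}$, and this matches condition (b) of Theorem~\ref{thm:Lyapunov}.

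Part (c) then requires condition (a) of Theorem~\ref{thm:Lyapunov}. Suppose $\cD(x,q)=0$. Since $\lambda\geq\lambda_{\min}>0$ and $x-q\in\TX$, strict concavity of $g$ (Assumption~\ref{ass:concave}.1) gives $(x-q)'(-\nabla^2 g(q))(x-q)>0$ unless $x=q$, so $\cD=0$ forces $x=q$. From $x=q$ we get $\cG=0$ and $\cH=\cF(x)$, as shown above. Theorem~\ref{thm:Lyapunov} then yields global asymptotic stability of $\Eq$.

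For the final claim, strict concavity of $f$ on the convex compact set $\X$ gives a unique maximizer $x^*$, so $\NE(\cF)=\{x^*\}$ and $\Eq=\{(x^*,x^*)\}$. I expect no serious obstacle in this argument. The most delicate point is the identification of $\NE(\cF)$ with the maximizer set, because concavity holds only in the tangential sense on $\X$ while $f$ is defined on $\mathbb{U}$. I would handle this by restricting $f$ to segments inside $\X$, where the tangential Hessian condition suffices for the first-order characterization.
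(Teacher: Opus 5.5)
Your proposal is correct and follows essentially the same route as the paper. Like the paper, you get (a) from $\fmax - f \geq 0$, the concave NE–maximizer identification and Lemma~\ref{lem:Bregman}(a); (b) from Lemma~\ref{lem:Ldot} with the equality in Theorem~\ref{thm:storageAPM}; and (c) by checking the hypotheses of Theorem~\ref{thm:Lyapunov} via Lemma~\ref{lem:wellposed} and $\cD = 0 \Rightarrow x = q$. You also spell out two points the paper only asserts: why \eqref{eq:Eqset} reduces to $\{(x,x)\mid x\in\NE(\cF)\}$, and why tangential concavity suffices for that identification.
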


\paragraph*{\bf Proof} By Assumption~\ref{ass:concave}.2 and the discussion following Definition~\ref{def:NE}, $\fmax - f(x) \geq 0$ with equality if and only if $x \in \NE(\cF)$, which together with Lemma~\ref{lem:Bregman}(a) proves (a). Item (b) follows from Lemma~\ref{lem:Ldot} and the equality in Theorem~\ref{thm:storageAPM}. To prove (c), we verify the conditions of Theorem~\ref{thm:Lyapunov}, whose standing assumptions hold by Lemma~\ref{lem:wellposed} and Theorem~\ref{thm:storageAPM}. If $\cD(x,q) = 0$, then $(x-q)'\nabla^2 g(q)(x-q) = 0$ with $x-q \in \TX$, so $x = q$ by Assumption~\ref{ass:concave}.1, whence $\cG(x,q) = 0$ and $\cH(x,q) = \cF(x) + G(x) - G(q) = \cF(x)$, which is condition (a) of Theorem~\ref{thm:Lyapunov}. Condition (b) is item (a) above, with~\eqref{eq:Eqset} equal to $\{(x,x) \ | \ x \in \NE(\cF)\}$. When $f$ is strictly concave on $\X$, its maximizer is unique. $\square$

\subsection{Nonconcave potentials} \label{sec:nonconcave}

We adopted Assumption~\ref{ass:concave}.2 because it makes the paper easier to read at first and covers important cases. It holds in every congestion game with nondecreasing link latencies, such as the one in~\S\ref{sec:example}, and it follows from Assumption~\ref{ass:concave}.1 in the special case~\eqref{eq:APMk}. The concavity of $f$ is used only to ensure that every Nash equilibrium is a global maximizer of $f$, so that $\cL$ vanishes exactly on $\Eq = \{(x,x) \ | \ x \in \NE(\cF)\}$. Suppose now that Assumption~\ref{ass:concave}.1 holds but some Nash equilibrium is not a global maximizer of $f$, requiring a multi-stability analysis~\cite{Angeli2007Multistability-}. Then, by Lemma~\ref{lem:Bregman} and Theorem~\ref{thm:storageAPM}, $\cL$ is still nonnegative and nonincreasing along solutions, but it vanishes only on the smaller set $\Eq_{\max} = \{(x,x) \ | \ x \in \argmax_{\X} f\}$. Two properties hold in general. First, every solution converges to $\Eq$, even when $\NE(\cF)$ is nonconvex or disconnected, because the attractivity part of the proof of Theorem~\ref{thm:Lyapunov} does not use the concavity of $f$. Second, $\Eq_{\max}$ is Lyapunov stable, because $\cL$ is a Lyapunov function for it. If, in addition, $\NE(\cF)$ is finite, then every solution converges to a single point of $\Eq$, because the limit set of each solution is connected. Moreover, for each Nash equilibrium $\hat{x}$, the point $(\hat{x},\hat{x})$ is asymptotically stable if $\hat{x}$ is a local maximizer of $f$, and it is unstable otherwise. Indeed, if $\hat{x}$ is a local maximizer, then $f(\hat{x}) - f(x) + \Breg(x,q)$ is a local Lyapunov function for $(\hat{x},\hat{x})$, which, together with the first property, gives asymptotic stability. If $\hat{x}$ is not a local maximizer, then a solution with $q_0 = x(0)$ arbitrarily close to $\hat{x}$ and $f\big(x(0)\big) > f(\hat{x})$ cannot converge to $(\hat{x},\hat{x})$, because $\cL$ never increases and starts below its value there, so it converges to another Nash equilibrium. Consequently, $\Eq_{\max}$ is not globally attractive when $f$ has a local maximizer that is not global, and $\Eq$ is not stable when some Nash equilibrium is not a local maximizer of $f$.

\subsection{Further Discussion}


Bregman divergences have been used as storage functions in~\cite{persis_2017aa} and also serve as Lyapunov functions for Hessian Riemannian game dynamics~\cite{Mertikopoulos2018Riemannian-game} and in convergence rate bounds for mirror descent~\cite{Li2025Convergence-rat}, with the generator coming from the learning dynamics rather than, as here, from the payoff mechanism on the other side of the loop.

\paragraph*{\bf Anticipatory action and transient behavior} When $x$ varies slowly compared with $\lambda$, the smoothed state lags behind the population state by about $\dot{x}/\lambda$, so that $\ptil \approx \frac{1}{\lambda}\frac{d}{dt}G\big(x(t)\big)$. In the case~\eqref{eq:APMk}, this gives $p \approx \cF\big(x(t)\big) + \frac{k}{\lambda}\frac{d}{dt}\cF\big(x(t)\big) \approx \cF\big(x(t+k/\lambda)\big)$. Hence, the agents respond to a first-order forecast of the payoffs, in the spirit of~\cite{Arslan2006Anticipatory-le,Fox2013Population-Game,Hankins_2025aa}. After linearization at $x^*$ with constant $\lambda$, the map from $\delta x$ to $\delta \ptil$ has transfer function $\frac{s}{s+\lambda}\nabla^2 g(x^*)$. It is negative imaginary~\cite{Lanzon2008Stability-Robus} because $\nabla^2 g(x^*) \preceq 0$ on $\TX$, consistent with the linear analysis in~\cite{Martins_2025aa}. Theorem~\ref{thm:storageAPM} turns this local frequency-domain property into a global storage identity. Beyond preserving the CCW property, the anticipatory term contributes the dissipation $\cD$ in the derivative $\frac{d}{dt}\cL = -\cD - \rho$ of Theorem~\ref{thm:mainAPM}. This dissipation is active whenever $x$ differs from $q$, and it damps the transient in addition to the dissipation $\rho$ of the rule. Since $\cL$ never increases and $\Breg$ is nonnegative, the following bound holds for every well-behaved rule and all $t \geq 0$,
\begin{equation} \label{eq:transientbound}
\fmax - f\big(x(t)\big) \leq \fmax - f\big(x(0)\big) + \Breg\big(x(0),q_0\big).
\end{equation}

\section{Congestion Game With a Nonlinear Congestion Growth Toll}
\label{sec:example}

We consider the congestion game studied in~\cite[\S V]{Martins_2025aa}, which follows~\cite[Example~13.2]{Sandholm2015Handbook-of-gam} and belongs to a class often used to model traffic assignment~\cite{Beckmann1956Studies-in-the-,Wardrop1952Some-theoretica,Smith1984The-stability-o}. Agents travel from node $A$ to node $D$ in the network of Fig.~\ref{fig:network}, choosing among the routes $ABD$, $ACD$ and $ABCD$, which correspond to strategies $1$, $2$ and $3$. The latency of each link is indicated next to the corresponding edge, and the payoff of a route is the negative of its total delay, which gives the stationary game
\begin{align*}
\cF(x) &= -\big[\,5 + 6(x_1 + x_3) \ \ \ 16x_2^2 + 4 \ \ \ 4 + 6x_1 + 10x_3\,\big]', \\
f(x) &= -5x_1 - 4x_2 - 4x_3 - 3(x_1+x_3)^2 - \tfrac{16}{3}x_2^3 - 2x_3^2,
\end{align*}
where $\NE(\cF) = \{x^*\}$ with $x^* = [1/4 \ \ 1/2 \ \ 1/4]'$.

\begin{figure}[t]
\begin{center}
\vspace{.1in}
\begin{tikzpicture}[scale=1.55]
\node[circle, draw, fill=orange!8, thick] (A) at (0,0) {A};
\node[circle, draw, fill=orange!8, thick] (D) at (4,0) {D};
\node[circle, draw, fill=orange!8, thick] (B) at (2,0.5) {B};
\node[circle, draw, fill=orange!8, thick] (C) at (2,-0.5) {C};
\draw[->, thick] (A) -- node[above, xshift=-1mm, rotate=14, font=\small] {$6(x_1+x_3)$} (B);
\draw[->, thick] (A) -- node[below, rotate=-14, font=\small] {$16x_2^2$} (C);
\draw[->, thick] (B) -- node[right, font=\small] {$4x_3$} (C);
\draw[->, thick] (B) -- node[above, rotate=-14, font=\small] {$5$} (D);
\draw[->, thick] (C) -- node[below, rotate=14, font=\small] {$4$} (D);
\end{tikzpicture}
\end{center}
\vspace{-3mm}
\caption{Traffic network with the latency of each link indicated next to the corresponding edge.}
\label{fig:network}
\vspace{-.1 in}
\end{figure}

Drivers have access to smoothed traffic counts $q$, obtained from~\eqref{eq:APMa} with constant $\lambda>0$, and respond to the anticipatory payoff mechanism~\eqref{eq:APMk}:
\begin{equation*} 
\dot{q} = \lambda(x - q), \qquad p = \cF(x) + k\big(\cF(x) - \cF(q)\big).
\end{equation*}
The anticipatory term $\ptil = k\big(\cF(x) - \cF(q)\big)$ is a congestion growth toll, that is, a route on which congestion is growing is charged in proportion to the growth, while a route whose delay is decreasing receives a temporary bonus. The toll vanishes at rest, so it does not alter $\NE(\cF)$.

To verify Assumption~\ref{ass:concave}, we compute for $v$ in $\TX$
\begin{equation*}
v'\big(-\nabla^2 f(q)\big)v = 6(v_1 + v_3)^2 + 4v_3^2 + 32q_2 v_2^2,
\end{equation*}
which vanishes only if $v_1 + v_3 = 0$ and $v_3 = 0$,
hence $v_1 = 0$ and, since $\mathbf{1}'v = 0$, $v_2 = 0$. Thus $f$ is strictly concave on $\X$, including where $q_2 = 0$, and Theorem~\ref{thm:mainAPM} applies with $g = kf$. 
The Bregman divergence in closed form is
\begin{multline} \label{eq:Bregcong}
\Bregf(x,q) = 3\big[(x_1 + x_3) - (q_1 + q_3)\big]^2 + 2(x_3 - q_3)^2 \\ + \tfrac{16}{3}(x_2 - q_2)^2(x_2 + 2q_2),
\end{multline}
The Lyapunov function and its derivative are
\begin{align}
\cL(x,q) &= f(x^*) - f(x) + k\,\Bregf(x,q), \label{eq:Lcong}\\
\tfrac{d}{dt}\cL
&= -k\lambda\big(6(v_1 + v_3)^2 + 4v_3^2 + 32q_2 v_2^2\big) - \rho(x,p), \nonumber
\end{align}
where $v := x - q$, and $(x^*,x^*)$ is globally asymptotically stable
for every well-behaved rule.
Here, $\rho$ need not be known and, for Smith's rule, $\rho(x,p) = \sum_{i,j} x_j[p_i - p_j]_+^2$.

\begin{figure*}[t]
\centering
\vspace{.05in}
\begin{minipage}[t]{0.49\textwidth}
\centering
\input{figures/traj.tikz}
\vspace{-3mm}
\caption{Trajectories for $x(0) \in \{c,d,w,r\}$ and $x^*$ (black circle).}
\label{fig:traj}
\end{minipage} \hfill
\begin{minipage}[t]{0.45\textwidth}
\centering
\hspace{-.3in}
\input{figures/lyap.tikz}
\vspace{-3mm}
\caption{Lyapunov function~\eqref{eq:Lcong} along the trajectories of Fig.~\ref{fig:traj}.}
\label{fig:lyap}
\end{minipage}
\vspace{-.1in}
\end{figure*}

We simulated the closed loop with $k = \lambda = 1$, $q_0 = x(0)$, the rules $\cT^{\mathrm{BNN}}$, $\cT^{\mathrm{Smith}}$ (Example~\ref{ex:rules}) and $\cT^{\mathrm{b}}$ (Example~\ref{ex:hybrid}), and the initial conditions $c := [0 \ 1 \ 0]'$, $d := [0.7 \ 0.3 \ 0]'$, $w := [0 \ 0.2 \ 0.8]'$ and $r := [0.6 \ 0 \ 0.4]'$. Fig.~\ref{fig:traj} shows the trajectories converging to $x^*$, and Fig.~\ref{fig:lyap} shows that $\cL$ decreases monotonically, as guaranteed by Theorem~\ref{thm:mainAPM}.
 
\begin{figure}[t]
\centering
\input{figures/transient.tikz}
\vspace{-4mm}
\caption{Proportion $x_2(t)$ of drivers on route $ACD$.}
\label{fig:transient}
\vspace{-.2in}
\end{figure}

Fig.~\ref{fig:transient} illustrates the effect of the toll on the transient. Starting from $c$, where every driver uses route $ACD$, the payoff of route $2$ is $-20$ while routes $1$ and $3$ offer $-5$ and $-4$. Without the toll ($k=0$), the population migrates abruptly and reaches the equilibrium within a fraction of a time unit. With the toll, routes $1$ and $3$ are charged as their delays grow, while route $2$ receives a bonus as it decongests, so the payoffs equalize almost immediately and the migration proceeds gradually, at the pace set by $\lambda$, with the drivers anticipating the congestion that their own switching creates.

\section{Conclusions}
\label{sec:conclusions}

We proposed a methodology to construct Lyapunov functions for evolutionary Nash equilibrium learning under CCW payoff mechanisms. The Lyapunov function is built from a CCW storage function of the payoff mechanism alone. The rule enters only through its correlation function, which appears in the derivative of the Lyapunov function and, for every well-behaved rule, is nonnegative and vanishes only at best responses. Hence, the same Lyapunov function applies to every well-behaved rule, so the guarantees hold without knowledge of the rule and include rules with an arbitrarily large imitative component. For nonlinear payoff mechanisms with anticipatory action, a Bregman divergence generated by the concave payoff component provides the storage function. 

\bibliographystyle{IEEEtran}
\bibliography{MartinsRefs_cleaned}

\end{document}